\newtheorem{theorem}{Theorem}
\newtheorem{lemma}[theorem]{Lemma}
\newtheorem{proposition}[theorem]{Proposition}
\newtheorem{remark}[theorem]{Remark}
\newenvironment{proof}[1][Proof]{\noindent\textbf{#1.} }{\ \rule{0.5em}{0.5em}}
\begin{document}

\title{Testing in the Presence of Nuisance Parameters: Some Comments on
Tests Post-Model-Selection and Random Critical Values}
\author{Hannes Leeb and Benedikt M. P\"{o}tscher \\
Department of Statistics, University of Vienna}
\date{ Preliminary version: May 25, 2012\\
First version: September 20, 2012\\
Second version: May 18, 2013\\
This version: May 15, 2014}
\maketitle

\begin{abstract}
We point out that the ideas underlying some test procedures recently
proposed for testing post-model-selection (and for some other test problems)
in the econometrics literature have been around for quite some time in the
statistics literature. We also sharpen some of these results in the
statistics literature. Furthermore, we show that some intuitively appealing
testing procedures, that have found their way into the econometrics
literature, lead to tests that do not have desirable size properties, not
even asymptotically.
\end{abstract}

\section{Introduction}

Suppose we have a sequence of statistical experiments given by a family of
probability measures $\left\{ P_{n,\alpha ,\beta }:\alpha \in A,\beta \in
B\right\} $ where $\alpha $ is a "parameter of interest", and $\beta $ is a
"nuisance-parameter". Often, but not always, $A$ and $B$ will be subsets of
Euclidean space. Suppose the researcher wants to test the null-hypothesis $%
H_{0}:\alpha =\alpha _{0}$ using the real-valued test-statistic $%
T_{n}(\alpha _{0})$, with large values of $T_{n}(\alpha _{0})$ being taken
as indicative for violation of $H_{0}$.\footnote{%
This framework obviously allows for "one-sided" as well as for "two-sided"
alternatives (when these concepts make sense) by a proper definition of the
test statistic.} Suppose further that the distribution of $T_{n}(\alpha
_{0}) $ under $H_{0}$ depends on the nuisance parameter $\beta $. This leads
to the key question: How should the critical value then be chosen? [Of
course, if another, pivotal, test-statistic is available, this one could be
used. However, we consider here the case where a (non-trivial) pivotal
test-statistic either does not exist or where the researcher -- for better
or worse -- insists on using $T_{n}(\alpha _{0})$.] In this situation a
standard way (see, e.g., Bickel and Doksum (1977), p.170) to deal with this
problem is to choose as critical value 
\begin{equation}
c_{n,\sup }(\delta )=\sup_{\beta \in B}c_{n,\beta }(\delta ),  \label{sup}
\end{equation}%
where $0<\delta <1$ and where $c_{n,\beta }(\delta )$ satisfies $P_{n,\alpha
_{0},\beta }\left( T_{n}(\alpha _{0})>c_{n,\beta }(\delta )\right) =\delta $
for each $\beta \in B$, i.e., $c_{n,\beta }(\delta )$ is \emph{a }$(1-\delta
)$-quantile of the distribution of $T_{n}(\alpha _{0})$ under $P_{n,\alpha
_{0},\beta }$. [We assume here the existence of such a $c_{n,\beta }(\delta
) $, but we do not insist that it is chosen as the smallest possible number
satisfying the above condition, although this will usually be the case.] In
other words, $c_{n,\sup }(\delta )$ is the "worst-case" critical value.
While the resulting test, which rejects $H_{0}$ for 
\begin{equation}
T_{n}(\alpha _{0})>c_{n,\sup }(\delta ),  \label{cons}
\end{equation}%
certainly is a level $\delta $ test (i.e., has size $\leq \delta $), the
conservatism caused by taking the supremum in (\ref{sup}) will often result
in poor power properties, especially for values of $\beta $ for which $%
c_{n,\beta }(\delta )$ is much smaller than $c_{n,\sup }(\delta )$. The test
obtained from (\ref{sup}) and (\ref{cons}) above (more precisely, an
asymptotic variant thereof) is what Andrews and Guggenberger (2009) call a
"size-corrected fixed critical value" test.\footnote{%
While Andrews and Guggenberger (2009) do not consider a finite-sample
framework but rather a "moving-parameter" asymptotic framework, the
underlying idea is nevertheless exactly the same.}

An alternative idea, which has some intuitive appeal and which is much less
conservative, is to use $c_{n,\hat{\beta}_{n}}(\delta )$ as a random
critical value, where $\hat{\beta}_{n}$ is an estimator for $\beta $ (taking
its values in $B$), and to reject $H_{0}$ if 
\begin{equation}
T_{n}(\alpha _{0})>c_{n,\hat{\beta}_{n}}(\delta )  \label{par_boot}
\end{equation}%
obtains (measurability of $c_{n,\hat{\beta}_{n}}(\delta )$ being assumed).
This choice of critical value can be viewed as a parametric bootstrap
procedure. Versions of $c_{n,\hat{\beta}_{n}}(\delta )$ have been considered
by Williams (1970) or, more recently, by Liu (2011). However, 
\begin{equation*}
P_{n,\alpha _{0},\beta }\left( T_{n}(\alpha _{0})>c_{n,\hat{\beta}%
_{n}}(\delta )\right) \geq P_{n,\alpha _{0},\beta }\left( T_{n}(\alpha
_{0})>c_{n,\sup }(\delta )\right)
\end{equation*}%
clearly holds for every $\beta $, indicating that the test using the random
critical value $c_{n,\hat{\beta}_{n}}(\delta )$ may \emph{not} be a level $%
\delta $ test, but may have size larger than $\delta $. This was already
noted by Loh (1985). A precise result in this direction, which is a
variation of Theorem 2.1 in Loh (1985), is as follows.

\begin{proposition}
\label{prop_1}Suppose that there exists a $\beta _{n}^{\max }=\beta
_{n}^{\max }(\delta )$ such that $c_{n,\beta _{n}^{\max }}(\delta
)=c_{n,\sup }(\delta )$. Then%
\begin{equation}
P_{n,\alpha _{0},\beta _{n}^{\max }}\left( c_{n,\hat{\beta}_{n}}(\delta
)<T_{n}(\alpha _{0})\leq c_{n,\sup }(\delta )\right) >0  \label{pos}
\end{equation}%
implies 
\begin{equation}
\sup_{\beta \in B}P_{n,\alpha _{0},\beta }\left( T_{n}(\alpha _{0})>c_{n,%
\hat{\beta}_{n}}(\delta )\right) >\delta ,  \label{overshoot}
\end{equation}%
i.e., the test using the random critical value $c_{n,\hat{\beta}_{n}}(\delta
)$ does not have level $\delta $. More generally, if $\hat{c}_{n}$ is any
random critical value satisfying $\hat{c}_{n}\leq c_{n,\beta _{n}^{\max
}}(\delta )(=c_{n,\sup }(\delta ))$ with $P_{n,\alpha _{0},\beta _{n}^{\max
}}$-probability $1$, then (\ref{pos}) still implies (\ref{overshoot}) if in
both expressions $c_{n,\hat{\beta}_{n}}(\delta )$ is replaced by $\hat{c}%
_{n} $. [The result continues to hold if the random critical value $\hat{c}%
_{n}$ also depends on some additional randomization mechanism.]
\end{proposition}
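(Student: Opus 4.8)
The plan is to argue entirely at the single parameter value $\beta _{n}^{\max }$ and to exploit that the supremum critical value dominates the random one. First I would record the elementary but decisive fact that, since $\hat{\beta}_{n}$ takes its values in $B$ and $c_{n,\sup }(\delta )=\sup_{\beta \in B}c_{n,\beta }(\delta )$, the inequality $c_{n,\hat{\beta}_{n}}(\delta )\leq c_{n,\sup }(\delta )$ holds for every realization, hence surely under $P_{n,\alpha _{0},\beta _{n}^{\max }}$ (in the general statement this is replaced directly by the hypothesis $\hat{c}_{n}\leq c_{n,\beta _{n}^{\max }}(\delta )=c_{n,\sup }(\delta )$ holding $P_{n,\alpha _{0},\beta _{n}^{\max }}$-a.s.). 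Consequently the rejection event $\{T_{n}(\alpha _{0})>c_{n,\hat{\beta}_{n}}(\delta )\}$ contains $\{T_{n}(\alpha _{0})>c_{n,\sup }(\delta )\}$.

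Next I would split the rejection event into two disjoint pieces according to whether $T_{n}(\alpha _{0})$ exceeds $c_{n,\sup }(\delta )$:
\[
\{T_{n}(\alpha _{0})>c_{n,\hat{\beta}_{n}}(\delta )\}=\{T_{n}(\alpha _{0})>c_{n,\sup }(\delta )\}\cup \{c_{n,\hat{\beta}_{n}}(\delta )<T_{n}(\alpha _{0})\leq c_{n,\sup }(\delta )\},
\]
the union being disjoint, and its correctness resting on the domination just established. Evaluating $P_{n,\alpha _{0},\beta _{n}^{\max }}$ of both sides and using additivity expresses the rejection probability under $\beta _{n}^{\max }$ as a sum of two terms.

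The heart of the argument is to identify each term. For the first, the defining property of the quantile together with $c_{n,\beta _{n}^{\max }}(\delta )=c_{n,\sup }(\delta )$ gives $P_{n,\alpha _{0},\beta _{n}^{\max }}(T_{n}(\alpha _{0})>c_{n,\sup }(\delta ))=\delta $ \emph{exactly}; here it matters that $c_{n,\beta }(\delta )$ was defined through the equality $P_{n,\alpha _{0},\beta }(T_{n}(\alpha _{0})>c_{n,\beta }(\delta ))=\delta $ rather than merely an inequality. The second term is precisely the quantity in (\ref{pos}), assumed strictly positive. Adding them yields $P_{n,\alpha _{0},\beta _{n}^{\max }}(T_{n}(\alpha _{0})>c_{n,\hat{\beta}_{n}}(\delta ))>\delta $, and since the supremum over $\beta \in B$ is at least its value at $\beta _{n}^{\max }$, the conclusion (\ref{overshoot}) follows at once.

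Finally, for the generalizations I would note that the argument uses nothing about $c_{n,\hat{\beta}_{n}}(\delta )$ beyond the a.s. bound $c_{n,\hat{\beta}_{n}}(\delta )\leq c_{n,\sup }(\delta )$, so substituting any $\hat{c}_{n}$ with $\hat{c}_{n}\leq c_{n,\sup }(\delta )$ $P_{n,\alpha _{0},\beta _{n}^{\max }}$-a.s. leaves every step intact. If $\hat{c}_{n}$ further involves an external randomization, I would pass to the product space carrying both the data and the randomizer; the first term is unchanged because $\{T_{n}(\alpha _{0})>c_{n,\sup }(\delta )\}$ depends only on $T_{n}(\alpha _{0})$ and a fixed constant, so its probability remains $\delta $, while (\ref{pos}) is read on the product space as well. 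I do not foresee a genuine obstacle here; the only points requiring care are the sure domination $c_{n,\hat{\beta}_{n}}(\delta )\leq c_{n,\sup }(\delta )$ and the exactness of the $\delta $ in the quantile identity, both supplied by the hypotheses.
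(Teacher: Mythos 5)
Your proposal is correct and follows essentially the same route as the paper's own proof: the pointwise domination $c_{n,\hat{\beta}_{n}}(\delta )\leq c_{n,\sup }(\delta )$, the disjoint decomposition of the rejection event at $\beta _{n}^{\max }$, the exact quantile identity giving the first term the value $\delta $, and condition (\ref{pos}) supplying the strictly positive second term. The remarks on the generalization to an arbitrary $\hat{c}_{n}$ and to external randomization likewise match the paper's (brief) treatment of the second claim.
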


\begin{proof}
Observe that $c_{n,\hat{\beta}_{n}}(\delta )\leq c_{n,\sup }(\delta )$
always holds. But then the l.h.s. of (\ref{overshoot}) is bounded from below
by 
\begin{eqnarray*}
&&P_{n,\alpha _{0},\beta _{n}^{\max }}\left( T_{n}(\alpha _{0})>c_{n,\hat{%
\beta}_{n}}(\delta )\right) \\
&=&P_{n,\alpha _{0},\beta _{n}^{\max }}\left( T_{n}(\alpha _{0})>c_{n,\sup
}(\delta )\right) +P_{n,\alpha _{0},\beta _{n}^{\max }}\left( c_{n,\hat{\beta%
}_{n}}(\delta )<T_{n}(\alpha _{0})\leq c_{n,\sup }(\delta )\right) \\
&=&P_{n,\alpha _{0},\beta _{n}^{\max }}\left( T_{n}(\alpha _{0})>c_{n,\beta
_{n}^{\max }}(\delta )\right) +P_{n,\alpha _{0},\beta _{n}^{\max }}\left(
c_{n,\hat{\beta}_{n}}(\delta )<T_{n}(\alpha _{0})\leq c_{n,\sup }(\delta
)\right) \\
&=&\delta +P_{n,\alpha _{0},\beta _{n}^{\max }}\left( c_{n,\hat{\beta}%
_{n}}(\delta )<T_{n}(\alpha _{0})\leq c_{n,\sup }(\delta )\right) >\delta ,
\end{eqnarray*}%
the last inequality holding in view of (\ref{pos}). The proof for the second
claim is completely analogous.
\end{proof}

\bigskip

To better appreciate condition (\ref{pos}) consider the case where $%
c_{n,\beta }(\delta )$ is uniquely maximized at $\beta _{n}^{\max }$ and $%
P_{n,\alpha _{0},\beta _{n}^{\max }}(\hat{\beta}_{n}\neq \beta _{n}^{\max })$
is positive. Then 
\begin{equation*}
P_{n,\alpha _{0},\beta _{n}^{\max }}(c_{n,\hat{\beta}_{n}}(\delta
)<c_{n,\sup }(\delta ))>0
\end{equation*}%
holds and therefore we can expect condition (\ref{pos}) to be satisfied,
unless there exists a quite strange dependence structure between $\hat{\beta}%
_{n}$ and $T_{n}(\alpha _{0})$. The same argument applies in the more
general situation where there are multiple maximizers $\beta _{n}^{\max }$
of $c_{n,\beta }(\delta )$ as soon as $P_{n,\alpha _{0},\beta _{n}^{\max }}(%
\hat{\beta}_{n}\notin \arg \max c_{n,\beta }(\delta ))>0$ holds for one of
the maximizers $\beta _{n}^{\max }$.

In the same vein, it is also useful to note that Condition (\ref{pos}) can
equivalently be stated as follows: The conditional cumulative distribution
function $P_{n,\alpha _{0},\beta _{n}^{\max }}(T_{n}(\alpha _{0})\leq \cdot
\mid \hat{\beta}_{n})$ of $T_{n}(\alpha _{0})$ given $\hat{\beta}_{n}$ puts
positive mass on the interval $(c_{n,\hat{\beta}_{n}}(\delta ),c_{n,\sup
}(\delta )]$ for a set of $\hat{\beta}_{n}$'s that has positive probability
under $P_{n,\alpha _{0},\beta _{n}^{\max }}$. [Also note that Condition (\ref%
{pos}) implies that $c_{n,\hat{\beta}_{n}}(\delta )<c_{n,\sup }(\delta )$
must hold with positive $P_{n,\alpha _{0},\beta _{n}^{\max }}$-probability.]
A sufficient condition for this then clearly is that for a set of $\hat{\beta%
}_{n}$'s of positive $P_{n,\alpha _{0},\beta _{n}^{\max }}$-probability we
have that (i) $c_{n,\hat{\beta}_{n}}(\delta )<c_{n,\sup }(\delta )$, and
(ii) the conditional cumulative distribution function $P_{n,\alpha
_{0},\beta _{n}^{\max }}(T_{n}(\alpha _{0})\leq \cdot \mid \hat{\beta}_{n})$
puts positive mass on \emph{every} non-empty interval. The analogous result
holds for the case where $\hat{c}_{n}$ replaces $c_{n,\hat{\beta}%
_{n}}(\delta )$ (and conditioning is w.r.t. $\hat{c}_{n}$), see Lemma \ref%
{help} in the Appendix for a formal statement.

The observation, that the test (\ref{par_boot}) based on the random critical
value $c_{n,\hat{\beta}_{n}}(\delta )$ typically will not be a level $\delta 
$ test, has led Loh (1985) and subsequently Berger and Boos (1994) and
Silvapulle (1996) to consider the following procedure (or variants thereof)
which leads to a level $\delta $ test that is somewhat less "conservative"
than the test given by (\ref{cons}):\thinspace \footnote{\label{foot}Loh
(1985) actually considers the random critical value $c_{n,\eta
_{n},Loh^{\ast }}(\delta )$ given by $\sup_{\beta \in I_{n}}c_{n,\beta
}(\delta )$, which typically does not lead to a level $\delta $ test in
finite samples in view of Proposition \ref{prop_1} (since $c_{n,\eta
_{n},Loh^{\ast }}(\delta )\leq c_{n,\sup }(\delta )$). However, Loh (1985)
focuses on the case where $\eta _{n}\rightarrow 0$ and shows that then the
size of the test converges to $\delta $; that is, the test is asymptotically
level $\delta $ if $\eta _{n}\rightarrow 0$. See also Remark \ref{rem}.} Let 
$I_{n}$ be a random set in $B$ satisfying%
\begin{equation*}
\inf_{\beta \in B}P_{n,\alpha _{0},\beta }\left( \beta \in I_{n}\right) \geq
1-\eta _{n},
\end{equation*}%
where $0\leq \eta _{n}<\delta $. I.e., $I_{n}$ is a confidence set for the
nuisance parameter $\beta $ with infimal coverage probability not less than $%
1-\eta _{n}$ (provided $\alpha =\alpha _{0}$). Define a random critical
value via%
\begin{equation}
c_{n,\eta _{n},Loh}(\delta )=\sup_{\beta \in I_{n}}c_{n,\beta }(\delta -\eta
_{n}).  \label{Loh}
\end{equation}%
Then we have 
\begin{equation*}
\sup_{\beta \in B}P_{n,\alpha _{0},\beta }\left( T_{n}(\alpha
_{0})>c_{n,\eta _{n},Loh}(\delta )\right) \leq \delta .
\end{equation*}%
This can be seen as follows: For every $\beta \in B$%
\begin{eqnarray*}
P_{n,\alpha _{0},\beta }\left( T_{n}(\alpha _{0})>c_{n,\eta _{n},Loh}(\delta
)\right) &=&P_{n,\alpha _{0},\beta }\left( T_{n}(\alpha _{0})>c_{n,\eta
_{n},Loh}(\delta ),\beta \in I_{n}\right) \\
&&+P_{n,\alpha _{0},\beta }\left( T_{n}(\alpha _{0})>c_{n,\eta
_{n},Loh}(\delta ),\beta \notin I_{n}\right) \\
&\leq &P_{n,\alpha _{0},\beta }\left( T_{n}(\alpha _{0})>c_{n,\beta }(\delta
-\eta _{n}),\beta \in I_{n}\right) +\eta _{n} \\
&\leq &P_{n,\alpha _{0},\beta }\left( T_{n}(\alpha _{0})>c_{n,\beta }(\delta
-\eta _{n})\right) +\eta _{n} \\
&=&\delta -\eta _{n}+\eta _{n}=\delta .
\end{eqnarray*}%
Hence, the random critical value $c_{n,\eta _{n},Loh}(\delta )$ results in a
test that is guaranteed to be level $\delta $. In fact, its size can also be
lower bounded by $\delta -\eta _{n}$ provided there exists a $\beta
_{n}^{\max }(\delta -\eta _{n})$ satisfying $c_{n,\beta _{n}^{\max }(\delta
-\eta _{n})}(\delta -\eta _{n})=\sup_{\beta \in B}c_{n,\beta }(\delta -\eta
_{n})$: This follows since%
\begin{eqnarray}
&&\sup_{\beta \in B}P_{n,\alpha _{0},\beta }\left( T_{n}(\alpha
_{0})>c_{n,\eta _{n},Loh}(\delta )\right)  \notag \\
&\geq &\sup_{\beta \in B}P_{n,\alpha _{0},\beta }\left( T_{n}(\alpha
_{0})>\sup_{\beta \in B}c_{n,\beta }(\delta -\eta _{n})\right)  \notag \\
&=&\sup_{\beta \in B}P_{n,\alpha _{0},\beta }\left( T_{n}(\alpha
_{0})>c_{n,\beta _{n}^{\max }(\delta -\eta _{n})}(\delta -\eta _{n})\right) 
\notag \\
&\geq &P_{n,\alpha _{0},\beta _{n}^{\max }(\delta -\eta _{n})}\left(
T_{n}(\alpha _{0})>c_{n,\beta _{n}^{\max }(\delta -\eta _{n})}(\delta -\eta
_{n})\right)  \notag \\
&=&\delta -\eta _{n}.  \label{lower}
\end{eqnarray}%
The critical value (\ref{Loh}) (or asymptotic variants thereof) has also
been used in econometrics, e.g., by DiTraglia (2011), McCloskey (2011,
2012), and Romano, Shaikh, and Wolf (2014).

The test based on the random critical value $c_{n,\eta _{n},Loh}(\delta )$
may have size strictly smaller than $\delta $. This suggests that this test
will not improve over the conservative test based on $c_{n,\sup }(\delta )$
for \emph{all} values of $\beta $: We can expect that the test based on (\ref%
{Loh}) will sacrifice some power when compared with the conservative test (%
\ref{cons}) when the true $\beta $ is close to $\beta _{n}^{\max }(\delta )$
or $\beta _{n}^{\max }(\delta -\eta _{n})$; however, we can often expect a
power gain for values of $\beta $ that are "far away" from $\beta _{n}^{\max
}(\delta )$ and $\beta _{n}^{\max }(\delta -\eta _{n})$, as we then
typically will have that $c_{n,\eta _{n},Loh}(\delta )$ is smaller than $%
c_{n,\sup }(\delta )$. Hence, each of the two tests will typically have a
power advantage over the other in certain parts of the parameter space $B$.

It is thus tempting to try to construct a test that has the power advantages
of both these tests by choosing as a critical value the smaller one of the
two critical values, i.e., by choosing%
\begin{equation}
\hat{c}_{n,\eta _{n},\min }(\delta )=\min \left( c_{n,\sup }(\delta
),c_{n,\eta _{n},Loh}(\delta )\right)   \label{McC}
\end{equation}%
as the critical value. While both critical values $c_{n,\sup }(\delta )$ and 
$c_{n,\eta _{n},Loh}(\delta )$ lead to level $\delta $ tests, this is,
however, unfortunately not the case in general for the test based on the
random critical value (\ref{McC}). To see why, note that by construction the
critical value (\ref{McC}) satisfies 
\begin{equation*}
\hat{c}_{n,\eta _{n},\min }(\delta )\leq c_{n,\sup }(\delta ),
\end{equation*}%
and hence can be expected to fall under the wrath of Proposition \ref{prop_1}
given above. Thus it can be expected to not deliver a test that has level $%
\delta $, but has a size that exceeds $\delta $. So while the test based on
the random critical value proposed in (\ref{McC}) will typically reject more
often than the tests based on (\ref{cons}) or on (\ref{Loh}), it does so by
violating the size constraint. Hence it suffers from the same problems as
the parametric bootstrap test (\ref{par_boot}). [We make the trivial
observation that the lower bound (\ref{lower}) also holds if $\hat{c}%
_{n,\eta _{n},\min }(\delta )$ instead of $c_{n,\eta _{n},Loh}(\delta )$ is
used, since $\hat{c}_{n,\eta _{n},\min }(\delta )\leq c_{n,\eta
_{n},Loh}(\delta )$ holds.] As a point of interest we note that the
construction (\ref{McC}) has actually been suggested in the literature, see
McCloskey's (2011).\footnote{%
This construction is no longer suggested in McCloskey (2012).} In fact,
McCloskey (2011) suggested a random critical value $\hat{c}_{n,McC}(\delta )$
which is the minimum of critical values of the form (\ref{McC}) with $\eta
_{n}$ running through a finite set of values; it is thus less than or equal
to the individual $\hat{c}_{n,\eta _{n},\min }$'s, which exacerbates the
size distortion problem even further.

While Proposition 1 shows that tests based on random critical values like $%
c_{n,\hat{\beta}_{n}}(\delta )$ or $\hat{c}_{n,\eta _{n},\min }(\delta )$
will typically not have level $\delta $, it leaves open the possibility that
the overshoot of the size over $\delta $ may converge to zero as sample size
goes to infinity, implying that the test would then be at least \emph{%
asymptotically} of level $\delta $. In sufficiently "regular" testing
problems this will indeed be the case. However, for many testing problems
where nuisance parameters are present such as, e.g., testing post-model
selection, it turns out that this is typically \emph{not }the case: In the
next section we illustrate this by providing a prototypical example where
the overshoot does \emph{not} converge to zero for the tests based on $c_{n,%
\hat{\beta}_{n}}(\delta )$ or $\hat{c}_{n,\eta _{n},\min }(\delta )$, and
hence these tests are not level $\delta $ \emph{even} \emph{asymptotically}.

\section{An Illustrative Example}

In the following we shall -- for the sake of exposition -- use a very simple
example to illustrate the issues involved. Consider the linear regression
model 
\begin{equation}
y_{t}\quad =\quad \alpha x_{t1}+\beta x_{t2}+\epsilon _{t}\qquad (1\leq
t\leq n)  \label{e.1}
\end{equation}%
under the "textbook"\ assumptions that the errors $\epsilon _{t}$ are i.i.d.~%
$N(0,\sigma ^{2})$, $\sigma ^{2}>0$, and the nonstochastic $n\times 2$
regressor matrix $X$ has full rank (implying $n>1$) and satisfies $X^{\prime
}X/n$ $\rightarrow $ $Q>0$ as $n\rightarrow \infty $. The variables $y_{t}$, 
$x_{ti}$, as well as the errors $\epsilon _{t}$ can be allowed to depend on
sample size $n$ (in fact may be defined on a sample space that itself
depends on $n$), but we do not show this in the notation. For simplicity, we
shall also assume that the error variance $\sigma ^{2}$ is known and equals $%
1$. It will be convenient to write the matrix $(X^{\prime }X/n)^{-1}$ as 
\begin{equation*}
(X^{\prime }X/n)^{-1}\quad =\quad \left( 
\begin{array}{cc}
\sigma _{\alpha ,n}^{2} & \sigma _{\alpha ,\beta ,n} \\ 
\sigma _{\alpha ,\beta ,n} & \sigma _{\beta ,n}^{2}%
\end{array}%
\right) .
\end{equation*}%
The elements of the limit of this matrix will be denoted by $\sigma _{\alpha
,\infty }^{2}$, etc. It will prove useful to define $\rho _{n}=\sigma
_{\alpha ,\beta ,n}/(\sigma _{\alpha ,n}\sigma _{\beta ,n})$, i.e., $\rho
_{n}$ is the correlation coefficient between the least-squares estimators
for $\alpha $ and $\beta $ in model (\ref{e.1}). Its limit will be denoted
by $\rho _{\infty }$. Note that $\left\vert \rho _{\infty }\right\vert <1$
holds since $Q>0$ has been assumed.

As in Leeb and P\"{o}tscher (2005) we shall consider two candidate models
from which we select on the basis of the data: The unrestricted model
denoted by \emph{U} which uses both regressors $x_{t1}$ and $x_{t2}$, and
the restricted model denoted by \emph{R }which uses only the regressor $%
x_{t1}$ (and thus corresponds to imposing the restriction $\beta =0$). The
least-squares estimators for $\alpha $ and $\beta $ in the unrestricted
model will be denoted by $\hat{\alpha}_{n}(\emph{U})$ and $\hat{\beta}_{n}(%
\emph{U})$, respectively. The least-squares estimator for $\alpha $ in the
restricted model will be denoted by $\hat{\alpha}_{n}(\emph{R})$, and we
shall set $\hat{\beta}_{n}(\emph{R})=0$. We shall decide between the
competing models \emph{U} and \emph{R }depending on whether $|\sqrt{n}\hat{%
\beta}(\emph{U}_{n})/\sigma _{\beta ,n}|>c$ or not, where $c>0$ is a
user-specified cut-off point independent of sample size (in line with the
fact that we consider conservative model selection). That is, we select the
model $\hat{M}_{n}$ according to%
\begin{equation*}
\hat{M}_{n}=\left\{ 
\begin{array}{cc}
\emph{U} & \text{if \ }|\sqrt{n}\hat{\beta}_{n}(\emph{U})/\sigma _{\beta
,n}|>c, \\ 
\emph{R} & \text{otherwise.}%
\end{array}%
\right.
\end{equation*}%
We now want to test the hypothesis $H_{0}:\alpha =\alpha _{0}$ versus $%
H_{1}: $ $\alpha >\alpha _{0}$ and we insist, for better or worse, on using
the test-statistic%
\begin{eqnarray*}
T_{n}(\alpha _{0})=\left[ n^{1/2}\left( \hat{\alpha}(\emph{R})-\alpha
_{0}\right) /\left( \sigma _{\alpha ,n}\left( 1-\rho _{n}^{2}\right)
^{1/2}\right) \right] \boldsymbol{1}(\hat{M}_{n}=\emph{R}) && \\
+\left[ n^{1/2}\left( \hat{\alpha}(\emph{U})-\alpha _{0}\right) /\sigma
_{\alpha ,n}\right] \boldsymbol{1}(\hat{M}_{n} &=&\emph{U}).
\end{eqnarray*}%
That is, depending on which of the two models has been selected, we insist
on using the corresponding textbook test statistic (for the known-variance
case). While this could perhaps be criticized as somewhat simple-minded, it
describes how such a test may be conducted in practice when model selection
precedes the inference step. It is well-known that if one uses this
test-statistic and naively compares it to the usual normal-based quantiles
acting as if the selected model were given a priori, this results in a test
with severe size-distortions, see, e.g., Kabaila and Leeb (2006) and
references therein. Hence, while sticking with $T_{n}(\alpha _{0})$ as the
test-statistic, we now look for appropriate critical values in the spirit of
the preceding section and discuss some of the proposals from the literature.
Note that the situation just described fits into the framework of the
preceding section with $\beta $ as the nuisance parameter and $B=\mathbb{R}$.

Calculations similar to the ones in Leeb and P\"{o}tscher (2005) show that
the finite-sample distribution of $T_{n}(\alpha _{0})$ under $H_{0}$ has a
density that is given by%
\begin{eqnarray*}
&&h_{n,\beta }(u)=\Delta \left( n^{1/2}\beta /\sigma _{\beta ,n},c\right)
\phi \left( u+\rho _{n}\left( 1-\rho _{n}^{2}\right) ^{-1/2}n^{1/2}\beta
/\sigma _{\beta ,n}\right)  \\
&&+\left( 1-\Delta \left( \left( 1-\rho _{n}^{2}\right) ^{-1/2}\left(
n^{1/2}\beta /\sigma _{\beta ,n}+\rho _{n}u\right) ,\left( 1-\rho
_{n}^{2}\right) ^{-1/2}c\right) \right) \phi \left( u\right) ,
\end{eqnarray*}%
where $\Delta (a,b)=\Phi (a+b)-\Phi (a-b)$ and where $\phi $ and $\Phi $
denote the density and cdf, respectively, of a standard normal variate. Let $%
H_{n,\beta }$ denote the cumulative distribution function (cdf)
corresponding to $h_{n,\beta }$.

Now, for given significance level $\delta $, $0<\delta <1$, let $c_{n,\beta
}(\delta )=H_{n,\beta }^{-1}(1-\delta )$ as in the preceding section. Note
that the inverse function exists, since $H_{n,\beta }$ is continuous and is
strictly increasing as its density $h_{n,\beta }$ is positive everywhere. As
in the preceding section let%
\begin{equation}
c_{n,\sup }(\delta )=\sup_{\beta \in \mathbb{R}}c_{n,\beta }(\delta )
\label{cons_1}
\end{equation}%
denote the conservative critical value (the supremum is actually a maximum
in the interesting case $\delta \leq 1/2$ in view of Lemmata \ref{quant} and %
\ref{quant_2} in the Appendix). Let $c_{n,\hat{\beta}_{n}(\emph{U)}}(\delta )
$ be the parametric bootstrap based random critical value. With $\eta $
satisfying $0<\eta <\delta $, we also consider the random critical value 
\begin{equation}
c_{n,\eta ,Loh}(\delta )=\sup_{\beta \in I_{n}}c_{n,\beta }(\delta -\eta )
\label{Loh_1}
\end{equation}%
where%
\begin{equation*}
I_{n}=\left[ \hat{\beta}_{n}(\emph{U})\pm n^{-1/2}\sigma _{\beta ,n}\Phi
^{-1}(1-(\eta /2))\right] 
\end{equation*}%
is an $1-\eta $ confidence interval for $\beta $. [Again the supremum is
actually a maximum.] We choose here\emph{\ }$\eta $\emph{\ independent }of%
\emph{\ }$n$\emph{\ }as in McCloskey (2011, 2012) and DiTraglia (2011) and
comment on sample size dependent\emph{\ }$\eta $\emph{\ }below. Furthermore
define%
\begin{equation}
\hat{c}_{n,\eta ,\min }(\delta )=\min \left( c_{n,\sup }(\delta ),c_{n,\eta
,Loh}(\delta )\right) .  \label{McC_1}
\end{equation}%
Recall from the discussion in Section 1 that these critical values have been
used in the literature in the contexts of testing post-model-selection,
post-moment-selection, or post-model-averaging. Among the critical values $%
c_{n,\sup }(\delta )$, $c_{n,\hat{\beta}_{n}(\emph{U)}}(\delta )$, $%
c_{n,\eta ,Loh}(\delta )$, and $\hat{c}_{n,\eta ,\min }(\delta )$, we
already know that $c_{n,\sup }(\delta )$ and $c_{n,\eta ,Loh}(\delta )$ lead
to tests that are valid level $\delta $ tests. We next confirm -- as
suggested by the discussion in the preceding section -- that the random
critical values $c_{n,\hat{\beta}_{n}(\emph{U)}}(\delta )$ and $\hat{c}%
_{n,\eta ,\min }(\delta )$ (at least for some choices of $\eta $) do \emph{%
not} lead to tests that have level $\delta $ (i.e., their size is strictly
larger than $\delta $). Moreover, we also show that the sizes of the tests
based on $c_{n,\hat{\beta}_{n}(\emph{U)}}(\delta )$ or $\hat{c}_{n,\eta
,\min }(\delta )$ do \emph{not }converge to $\delta $ as $n\rightarrow
\infty $, implying that the asymptotic sizes of these tests exceed $\delta $%
. These results a fortiori also apply to any random critical value that does
not exceed $c_{n,\hat{\beta}_{n}(\emph{U)}}(\delta )$ or $\hat{c}_{n,\eta
,\min }(\delta )$ (such as, e.g., McCloskey's (2011) $\hat{c}_{n,McC}(\delta
)$ or $c_{n,\eta ,Loh^{\ast }}(\delta )$). In the subsequent theorem we
consider for simplicity only the case $\rho _{n}\equiv \rho $, but the
result extends to the more general case where $\rho _{n}$ may depend on $n$.

\begin{theorem}
\label{main}Suppose $\rho _{n}\equiv \rho \neq 0$ and let $0<\delta \leq 1/2$
be arbitrary. Then 
\begin{equation}
\inf_{n>1}\sup_{\beta \in \mathbb{R}}P_{n,\alpha _{0},\beta }\left(
T_{n}(\alpha _{0})>c_{n,\hat{\beta}_{n}(\emph{U})}(\delta )\right) >\delta .
\label{1}
\end{equation}%
Furthermore, for each fixed $\eta $, $0<\eta <\delta $, that is sufficiently
small we have 
\begin{equation}
\inf_{n>1}\sup_{\beta \in \mathbb{R}}P_{n,\alpha _{0},\beta }\left(
T_{n}(\alpha _{0})>\hat{c}_{n,\eta ,\min }(\delta )\right) >\delta .
\label{2}
\end{equation}
\end{theorem}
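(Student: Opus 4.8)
The plan is to exploit a reparametrization that renders the entire problem free of $n$, so that the $\inf_{n>1}$ in (\ref{1}) and (\ref{2}) becomes trivial. First I would introduce the rescaled nuisance parameter $\nu=n^{1/2}\beta/\sigma_{\beta,n}$ and note that, because $\rho_n\equiv\rho$, the pair $(Z,W)=\left(n^{1/2}(\hat\alpha_n(U)-\alpha_0)/\sigma_{\alpha,n},\,n^{1/2}\hat\beta_n(U)/\sigma_{\beta,n}\right)$ is, under $P_{n,\alpha_0,\beta}$, bivariate normal with mean $(0,\nu)$, unit variances and correlation $\rho$, a law depending on $(n,\beta)$ only through $\nu$. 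Using the omitted-variable identity $\hat\alpha_n(R)=\hat\alpha_n(U)+(\sigma_{\alpha,\beta,n}/\sigma_{\beta,n}^2)\hat\beta_n(U)$ one gets $n^{1/2}(\hat\alpha_n(R)-\alpha_0)/\sigma_{\alpha,n}=Z+\rho W$, hence $T_n(\alpha_0)=(Z+\rho W)(1-\rho^2)^{-1/2}\boldsymbol{1}(|W|\le c)+Z\,\boldsymbol{1}(|W|>c)$, an $n$-free function of $(Z,W)$; likewise $H_{n,\beta}$, and therefore $c_{n,\beta}(\delta)$, depends on $(n,\beta)$ only through $\nu$. Writing $q_\nu(\delta)$ for this common value, one has that $c_{n,\sup}(\delta)=\sup_\nu q_\nu(\delta)$, the maximizer $\nu^{\max}(\delta)$ with $q_{\nu^{\max}(\delta)}(\delta)=c_{n,\sup}(\delta)$, the plug-in $c_{n,\hat\beta_n(U)}(\delta)=q_W(\delta)$, and the interval $I_n$, which in the $\nu$-scale is the $n$-free window $[W\pm\Phi^{-1}(1-\eta/2)]$ giving $c_{n,\eta,Loh}(\delta)=\sup_{\nu'\in[W\pm\Phi^{-1}(1-\eta/2)]}q_{\nu'}(\delta-\eta)$, are all $n$-free once expressed through $(Z,W)$ and $\nu$. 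Consequently both suprema over $\beta$ in (\ref{1}) and (\ref{2}) are constants in $n$, so it suffices to show that each canonical size strictly exceeds $\delta$.

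For (\ref{1}) I would simply invoke Proposition \ref{prop_1} in this canonical model. Lemmata \ref{quant} and \ref{quant_2} supply, for $\delta\le 1/2$, a finite maximizer $\nu^{\max}(\delta)$, so the hypothesis of Proposition \ref{prop_1} holds, and it remains only to verify the positivity condition (\ref{pos}), for which the two-part sufficient condition recorded after Proposition \ref{prop_1} applies cleanly. For (i): since $\rho\neq 0$ the map $\nu\mapsto q_\nu(\delta)$ is continuous and non-constant (indeed $q_\nu(\delta)\to\Phi^{-1}(1-\delta)<c_{n,\sup}(\delta)$ as $\nu\to\infty$), so $\{w:q_w(\delta)<c_{n,\sup}(\delta)\}$ is a nonempty open set and thus has positive probability under $W\sim N(\nu^{\max}(\delta),1)$. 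For (ii): given $W=w$ the statistic $T_n(\alpha_0)$ is a nondegenerate Gaussian, equal to $Z$ with conditional law $N(\rho(w-\nu),1-\rho^2)$ on $\{|w|>c\}$ and to $(Z+\rho w)(1-\rho^2)^{-1/2}$ with positive conditional variance on $\{|w|\le c\}$, and therefore charges every nonempty interval. Hence (\ref{pos}) holds and Proposition \ref{prop_1} yields a canonical size exceeding $\delta$, which by the first step is exactly (\ref{1}).

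For (\ref{2}) the argument runs the same way through the second part of Proposition \ref{prop_1}, since $\hat c_{n,\eta,\min}(\delta)\le c_{n,\sup}(\delta)=q_{\nu^{\max}(\delta)}(\delta)$ holds surely; what must be checked is that $\{W:c_{n,\eta,Loh}(\delta)<c_{n,\sup}(\delta)\}$ carries positive probability under $W\sim N(\nu^{\max}(\delta),1)$, after which condition (ii) again produces (\ref{pos}) with $\hat c_{n,\eta,\min}(\delta)$ in place of $c_{n,\hat\beta_n(U)}(\delta)$. I would establish this by letting $W\to+\infty$: the window $[W\pm\Phi^{-1}(1-\eta/2)]$ slides out to $+\infty$, where $q_{\nu'}(\delta-\eta)\to\Phi^{-1}(1-\delta+\eta)$, so $c_{n,\eta,Loh}(\delta)\to\Phi^{-1}(1-\delta+\eta)$. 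The main obstacle is precisely that small $\eta$ inflates the window (its half-width $\Phi^{-1}(1-\eta/2)\to\infty$), pushing $c_{n,\eta,Loh}(\delta)$ toward $\sup_{\nu'}q_{\nu'}(\delta-\eta)=c_{n,\sup}(\delta-\eta)\ge c_{n,\sup}(\delta)$ for typical $W$; the resolution is to use the strict gap $c_{n,\sup}(\delta)>\Phi^{-1}(1-\delta)$ (valid for $\rho\neq 0$ by Lemmata \ref{quant} and \ref{quant_2}) together with continuity of $\eta\mapsto\Phi^{-1}(1-\delta+\eta)$: for all sufficiently small $\eta$ one has $\Phi^{-1}(1-\delta+\eta)<c_{n,\sup}(\delta)$, whence $c_{n,\eta,Loh}(\delta)<c_{n,\sup}(\delta)$ on a half-line $\{W>W_0\}$ of positive probability. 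Combining this with (ii) gives (\ref{2}), and the $n$-freeness from the first step upgrades the conclusion to the stated $\inf_{n>1}$.
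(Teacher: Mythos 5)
Your proof is correct, and its skeleton coincides with the paper's: the reparametrization $\gamma=n^{1/2}\beta/\sigma_{\beta,n}$ that renders the problem $n$-free, the existence of a finite maximizer $\gamma^{\max}$ supplied by Lemmata \ref{quant} and \ref{quant_2}, the strict gap $\bar{c}_{\sup}(\delta)>\Phi^{-1}(1-\delta)$ together with its $\eta$-perturbed version (the paper's condition (\ref{cruc})), and the device of sending $\hat{\gamma}(\emph{U})$ to infinity so that the random critical value falls strictly below $\bar{c}_{\sup}(\delta)$ on a set of positive probability. Where you genuinely diverge is in the finishing step and in how $\inf_{n>1}$ is absorbed. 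The paper does not route through Proposition \ref{prop_1} or Lemma \ref{help} here; it redoes the decomposition inline and then \emph{computes} the overshoot probability $P_{n,\alpha_{0},\beta_{n}^{\max}}\left(\bar{c}_{\sup}(\delta)-\varepsilon<T_{n}(\alpha_{0})\leq\bar{c}_{\sup}(\delta),\hat{\gamma}(\emph{U})\in A\right)$ explicitly as a combination of Gaussian probabilities that is visibly positive and $n$-free, which yields an $n$-free lower bound $\delta+p$, $p>0$, and hence the infimum over $n$ in one stroke. You instead observe that the size itself is $n$-free (a fact the paper only records in Remark 2(ii)) and then verify hypothesis (\ref{pos}) of Proposition \ref{prop_1} abstractly via the two-part sufficient condition and Lemma \ref{help}, using that the conditional law of $T_{n}(\alpha_{0})$ given $\hat{\gamma}(\emph{U})=w$ is a nondegenerate Gaussian in both the $|w|\leq c$ and $|w|>c$ regimes. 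Both routes are valid; yours is less computational and reuses the paper's general machinery, while the paper's explicit calculation delivers a concrete quantitative lower bound on the excess size. One harmless slip: the omitted-variable identity is $\hat{\alpha}_{n}(\emph{U})=\hat{\alpha}_{n}(\emph{R})+\rho\sigma_{\alpha,n}\hat{\beta}_{n}(\emph{U})/\sigma_{\beta,n}$ (see the proof of Lemma \ref{quant_2}), so the restricted-model branch of $T_{n}(\alpha_{0})$ is $(Z-\rho W)(1-\rho^{2})^{-1/2}$ rather than $(Z+\rho W)(1-\rho^{2})^{-1/2}$; this sign does not affect your argument, which only needs nondegeneracy of the conditional law.
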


\begin{proof}
We first prove (\ref{2}). Introduce the abbreviation $\gamma =n^{1/2}\beta
/\sigma _{\beta ,n}$ and define $\hat{\gamma}(\emph{U})=n^{1/2}\hat{\beta}(%
\emph{U})/\sigma _{\beta ,n}$. Observe that the density $h_{n,\beta }$ (and
hence the cdf $H_{n,\beta }$) depends on the nuisance parameter $\beta $
only via $\gamma $, and otherwise is independent of sample size $n$ (since $%
\rho _{n}=\rho $ is assumed). Let $\bar{h}_{\gamma }$ be the density of $%
T_{n}(\alpha _{0})$ when expressed in the reparameterization $\gamma $.\emph{%
\ }As a consequence, the quantiles satisfy $c_{n,\beta }(v)=\bar{c}_{\gamma
}(v)$ for every $0<v<1$, where $\bar{c}_{\gamma }(v)=\bar{H}_{\gamma
}^{-1}(1-v)$ and $\bar{H}_{\gamma }$ denotes the cdf corresponding to $\bar{h%
}_{\gamma }$. Furthermore, for $0<\eta <\delta $, observe that $c_{n,\eta
,Loh}(\delta )=\sup_{\beta \in I_{n}}c_{n,\beta }(\delta -\eta )$ can be
rewritten as%
\begin{equation*}
c_{n,\eta ,Loh}(\delta )=\sup_{\gamma \in \left[ \hat{\gamma}(\emph{U})\pm
\Phi ^{-1}(1-(\eta /2))\right] }\bar{c}_{\gamma }(\delta -\eta ).
\end{equation*}%
Now define $\gamma ^{\max }=\gamma ^{\max }(\delta )$ as a value of $\gamma $
such that $\bar{c}_{\gamma ^{\max }}(\delta )=\bar{c}_{\sup }(\delta
):=\sup_{\gamma \in \mathbb{R}}\bar{c}_{\gamma }(\delta )$. That such a
maximizer exists follows from Lemmata \ref{quant} and \ref{quant_2} in the
Appendix. Note that $\gamma ^{\max }$ does not depend on $n$. Of course, $%
\gamma ^{\max }$ is related to $\beta _{n}^{\max }=\beta _{n}^{\max }(\delta
)$ via $\gamma ^{\max }=n^{1/2}\beta _{n}^{\max }/\sigma _{\beta ,n}$. Since 
$\bar{c}_{\sup }(\delta )=\bar{c}_{\gamma ^{\max }}(\delta )$ is strictly
larger than 
\begin{equation*}
\lim_{\left\vert \gamma \right\vert \rightarrow \infty }\bar{c}_{\gamma
}(\delta )=\Phi ^{-1}(1-\delta )
\end{equation*}%
in view of Lemmata \ref{quant} and \ref{quant_2} in the Appendix, we have
for all sufficiently small $\eta $, $0<\eta <\delta $, that 
\begin{equation}
\lim_{\left\vert \gamma \right\vert \rightarrow \infty }\bar{c}_{\gamma
}(\delta -\eta )=\Phi ^{-1}(1-(\delta -\eta ))<\bar{c}_{\sup }(\delta )=\bar{%
c}_{\gamma ^{\max }}(\delta ).  \label{cruc}
\end{equation}%
Fix such an $\eta $. Let now $\varepsilon >0$ satisfy $\varepsilon <\bar{c}%
_{\sup }(\delta )-\Phi ^{-1}(1-(\delta -\eta ))$. Because of the limit
relation in the preceding display, we see that there exists $M=M(\varepsilon
)>0$ such that for $\left\vert \gamma \right\vert >M$ we have $\bar{c}%
_{\gamma }(\delta -\eta )<\bar{c}_{\sup }(\delta )-\varepsilon $. Define the
set%
\begin{equation*}
A=\left\{ x\in \mathbb{R}:\left\vert x\right\vert >\Phi ^{-1}(1-(\eta
/2))+M\right\} .
\end{equation*}%
Then on the event $\left\{ \hat{\gamma}(\emph{U})\in A\right\} $ we have
that $\hat{c}_{n,\eta ,\min }(\delta )\leq \bar{c}_{\sup }(\delta
)-\varepsilon $. Furthermore, noting that $P_{n,\alpha _{0},\beta _{n}^{\max
}}\left( T_{n}(\alpha _{0})>c_{n,\sup }(\delta )\right) =P_{n,\alpha
_{0},\beta _{n}^{\max }}\left( T_{n}(\alpha _{0})>\bar{c}_{\sup }(\delta
)\right) =\delta $, we have%
\begin{eqnarray*}
&&\sup_{\beta \in \mathbb{R}}P_{n,\alpha _{0},\beta }\left( T_{n}(\alpha
_{0})>\hat{c}_{n,\eta ,\min }(\delta )\right) \geq P_{n,\alpha _{0},\beta
_{n}^{\max }}\left( T_{n}(\alpha _{0})>\hat{c}_{n,\eta ,\min }(\delta
)\right)  \\
&=&P_{n,\alpha _{0},\beta _{n}^{\max }}\left( T_{n}(\alpha _{0})>\bar{c}%
_{\sup }(\delta )\right) +P_{n,\alpha _{0},\beta _{n}^{\max }}\left( \hat{c}%
_{n,\eta ,\min }(\delta )<T_{n}(\alpha _{0})\leq \bar{c}_{\sup }(\delta
)\right)  \\
&\geq &\delta +P_{n,\alpha _{0},\beta _{n}^{\max }}\left( \hat{c}_{n,\eta
,\min }(\delta )<T_{n}(\alpha _{0})\leq \bar{c}_{\sup }(\delta ),\hat{\gamma}%
(\emph{U})\in A\right)  \\
&\geq &\delta +P_{n,\alpha _{0},\beta _{n}^{\max }}\left( \bar{c}_{\sup
}(\delta )-\varepsilon <T_{n}(\alpha _{0})\leq \bar{c}_{\sup }(\delta ),\hat{%
\gamma}(\emph{U})\in A\right) .
\end{eqnarray*}%
We are hence done if we can show that the probability in the last line is
positive and independent of $n$. But this probability can be written as
follows\thinspace \footnote{%
The corresponding calculation in previous versions of this paper had
erroneously omitted the term $\rho \left( 1-\rho ^{2}\right) ^{-1/2}\gamma
^{\max }$ from the expression on the far right-hand side of the subsequent
display. This is corrected here by accounting for this term. Alternatively,
one could drop the probability involving $\left\vert \hat{\gamma}(\emph{U}%
)\right\vert \leq c$ altogether from the proof and work with the resulting
lower bound.
\par
{}}%
\begin{eqnarray*}
&&P_{n,\alpha _{0},\beta _{n}^{\max }}\left( \bar{c}_{\sup }(\delta
)-\varepsilon <T_{n}(\alpha _{0})\leq \bar{c}_{\sup }(\delta ),\hat{\gamma}(%
\emph{U})\in A\right)  \\
&=&P_{n,\alpha _{0},\beta _{n}^{\max }}\left( \bar{c}_{\sup }(\delta
)-\varepsilon <T_{n}(\alpha _{0})\leq \bar{c}_{\sup }(\delta ),\hat{\gamma}(%
\emph{U})\in A,\left\vert \hat{\gamma}(\emph{U})\right\vert \leq c\right)  \\
&&+P_{n,\alpha _{0},\beta _{n}^{\max }}\left( \bar{c}_{\sup }(\delta
)-\varepsilon <T_{n}(\alpha _{0})\leq \bar{c}_{\sup }(\delta ),\hat{\gamma}(%
\emph{U})\in A,\left\vert \hat{\gamma}(\emph{U})\right\vert >c\right)  \\
&=&P_{n,\alpha _{0},\beta _{n}^{\max }}\left( \bar{c}_{\sup }(\delta )\geq
n^{1/2}\left( \hat{\alpha}(\emph{R})-\alpha _{0}\right) /\left( \sigma
_{\alpha ,n}\left( 1-\rho ^{2}\right) ^{1/2}\right) >\right.  \\
&&\qquad \qquad \qquad \left. \bar{c}_{\sup }(\delta )-\varepsilon ,\hat{%
\gamma}(\emph{U})\in A,\left\vert \hat{\gamma}(\emph{U})\right\vert \leq
c\right)  \\
&&+P_{n,\alpha _{0},\beta _{n}^{\max }}\left( \bar{c}_{\sup }(\delta )\geq
n^{1/2}\left( \hat{\alpha}(\emph{U})-\alpha _{0}\right) /\sigma _{\alpha
,n}>\right.  \\
&&\qquad \qquad \qquad \left. \bar{c}_{\sup }(\delta )-\varepsilon ,\hat{%
\gamma}(\emph{U})\in A,\left\vert \hat{\gamma}(\emph{U})\right\vert
>c\right)  \\
&=&\left[ \Phi (\bar{c}_{\sup }(\delta )+\rho \left( 1-\rho ^{2}\right)
^{-1/2}\gamma ^{\max })-\Phi (\bar{c}_{\sup }(\delta )+\rho \left( 1-\rho
^{2}\right) ^{-1/2}\gamma ^{\max }-\varepsilon )\right]  \\
&&\times \Pr \left( Z_{2}\in A,\left\vert Z_{2}\right\vert \leq c\right)
+\Pr \left( \bar{c}_{\sup }(\delta )\geq Z_{1}>\bar{c}_{\sup }(\delta
)-\varepsilon ,Z_{2}\in A,\left\vert Z_{2}\right\vert >c\right) ,
\end{eqnarray*}%
where we have made use of independence of $\hat{\alpha}(\emph{R})$ and $\hat{%
\gamma}(\emph{U})$, cf. Lemma A.1 in Leeb and P\"{o}tscher (2003), and of
the fact that $n^{1/2}\left( \hat{\alpha}(\emph{R})-\alpha _{0}\right) $ is
distributed as $N(-\sigma _{\alpha ,n}\rho \gamma ^{\max },\sigma _{\alpha
,n}^{2}\left( 1-\rho ^{2}\right) )$ under $P_{n,\alpha _{0},\beta _{n}^{\max
}}$. Furthermore, we have used the fact that $\left( n^{1/2}\left( \hat{%
\alpha}(\emph{U})-\alpha _{0}\right) /\sigma _{\alpha ,n},\hat{\gamma}(\emph{%
U})\right) ^{\prime }$ is under $P_{n,\alpha _{0},\beta _{n}^{\max }}$
distributed as $\left( Z_{1},Z_{2}\right) ^{\prime }$ where%
\begin{equation*}
\left( Z_{1},Z_{2}\right) ^{\prime }\sim N\left( (0,\gamma ^{\max })^{\prime
},\left( 
\begin{array}{cc}
1 & \rho  \\ 
\rho  & 1%
\end{array}%
\right) \right) ,
\end{equation*}%
which is a non-singular normal distribution since $\left\vert \rho
\right\vert <1$. It is now obvious from the final expression in the last but
one display that the probability in question is strictly positive and is
independent of $n$. This proves (\ref{2}).

We turn to the proof of (\ref{1}). Observe that $c_{n,\hat{\beta}_{n}(\emph{U%
})}(\delta )=\bar{c}_{\hat{\gamma}(\emph{U})}(\delta )$ and that%
\begin{equation*}
\bar{c}_{\sup }(\delta )=\bar{c}_{\gamma ^{\max }}(\delta )>\lim_{\left\vert
\gamma \right\vert \rightarrow \infty }\bar{c}_{\gamma }(\delta )=\Phi
^{-1}(1-\delta )
\end{equation*}%
in view of Lemmata \ref{quant} and \ref{quant_2} in the Appendix. Choose $%
\varepsilon >0$ to satisfy $\varepsilon <\bar{c}_{\sup }(\delta )-\Phi
^{-1}(1-\delta )$. Because of the limit relation in the preceding display,
we see that there exists $M=M(\varepsilon )>0$ such that for $\left\vert
\gamma \right\vert >M$ we have $\bar{c}_{\gamma }(\delta )<\bar{c}_{\sup
}(\delta )-\varepsilon $. Define the set%
\begin{equation*}
B=\left\{ x\in \mathbb{R}:\left\vert x\right\vert >M\right\} .
\end{equation*}%
Then on the event $\left\{ \hat{\gamma}(\emph{U})\in B\right\} $ we have
that $c_{n,\hat{\beta}_{n}(\emph{U})}(\delta )=\bar{c}_{\hat{\gamma}(\emph{U}%
)}(\delta )\leq \bar{c}_{\sup }(\delta )-\varepsilon $. The rest of the
proof is then completely analogous to the proof of (\ref{2}) with the set $A$
replaced by $B$.
\end{proof}

\begin{remark}
\normalfont(i) Inspection of the proof shows that (\ref{2}) holds for every $%
\eta $, $0<\eta <\delta $, that satisfies (\ref{cruc}).

(ii) It is not difficult to show that the suprema in (\ref{1}) and (\ref{2})
actually do \emph{not} depend on $n$.
\end{remark}

\begin{remark}
\label{rem}\normalfont If we allow $\eta $ to depend on $n$, we may choose $%
\eta =\eta _{n}\rightarrow 0$ as $n\rightarrow \infty $. Then the test based
on $\hat{c}_{n,\eta _{n},\min }(\delta )$ still has a size that strictly
overshoots $\delta $ for every $n$, but the overshoot will go to zero as $%
n\rightarrow \infty $. While this test then "approaches" the conservative
test that uses $c_{n,\sup }(\delta )$, it does not respect the level for any
finite sample size. [The same can be said for Loh's (1985) original proposal 
$c_{n,\eta _{n},Loh^{\ast }}(\delta )$, cf.~Footnote \ref{foot}.] Contrast
this with the test based on $c_{n,\eta _{n},Loh}(\delta )$ which holds the
level for each $n$, and also "approaches" the conservative test if $\eta
_{n}\rightarrow 0$. Hence, there seems to be little reason for preferring $%
\hat{c}_{n,\eta _{n},\min }(\delta )$ (or $c_{n,\eta _{n},Loh^{\ast
}}(\delta )$) to $c_{n,\eta _{n},Loh}(\delta )$ in this scenario where $\eta
_{n}\rightarrow 0$.
\end{remark}

\bigskip

\section{References}

\ \ \ Andrews, D.~W.~K. \& P. Guggenberger (2009): Hybrid and Size-Corrected
Subsampling Methods. \emph{Econometrica }77, 721-762.

Bickel, P.~J. \& K.~A. Doksum (1977): \textit{Mathematical Statistics: Basic
Ideas and Selected Topics. }Holden-Day, Oakland.

Berger, R.~L. \& D.~D. Boos (1994): P Values Maximized Over a Confidence Set
for the Nuisance Parameter. \emph{Journal of the American Statistical
Association }89, 1012-1016.

DiTraglia, F.~J. (2011):\ Using Invalid Instruments on Purpose:\ Focused
Moment Selection and Averaging for GMM. Working Paper, Version November 9,
2011.

Kabaila, P. \& H. Leeb (2006): On the Large-Sample Minimal Coverage
Probability of Confidence Intervals after Model Selection. \emph{Journal of
the American Statistical Association }101, 619-629.

Leeb, H. \& B.~M. P\"{o}tscher (2003): The Finite-Sample Distribution of
Post-Model-Selection Estimators and Uniform Versus Non-Uniform
Approximations. \emph{Econometric Theory }19, 100-142.

Leeb, H. \& B.~M. P\"{o}tscher (2005): Model Selection and Inference: Facts
and Fiction. \emph{Econometric Theory }21, 29-59.

Loh, W.-Y. (1985): A New Method for Testing Separate Families of Hypotheses. 
\emph{Journal of the American Statistical Association }80, 362-368.

Liu, C.-A. (2011): A Plug-In Averaging Estimator for Regressions with
Heteroskedastic Errors, Working Paper, Version October 29, 2011.

McCloskey, A. (2011): Powerful Procedures with Correct Size for Test
Statistics with Limit Distributions that are Discontinuous in Some
Parameters. Working Paper, Version October 2011.

McCloskey, A. (2012): Bonferroni-based Size Correction for Nonstandard
Testing Problems. Working Paper, Brown University.

Romano, J.~P. \& A. Shaikh, M. Wolf (2014): A Practical Two-Step Method for
Testing Moment Inequalities. Working Paper, University of Zurich.

Silvapulle, M.~J. (1996): A Test in the Presence of Nuisance Parameters. 
\emph{Journal of the American Statistical Association }91, 1690-1693.
(Correction, ibidem 92 (1997), 801.)

Williams, D.~A. (1970): Discrimination Between Regression Models to
Determine the Pattern of Enzyme Synthesis in Synchronous Cell Cultures. 
\emph{Biometrics }26, 23-32.

\bigskip

\appendix{}

\section{Appendix}

\begin{lemma}
\label{help}Suppose a random variable $\hat{c}_{n}$ satisfies $\Pr \left( 
\hat{c}_{n}\leq c^{\ast }\right) =1$ for some real number $c^{\ast }$ as
well as $\Pr \left( \hat{c}_{n}<c^{\ast }\right) >0$. Let $S$ be real-valued
random variable. If for every non-empty interval $J$ in the real line 
\begin{equation}
\Pr \left( S\in J\mid \hat{c}_{n}\right) >0  \label{pos_interval}
\end{equation}%
holds almost surely, then 
\begin{equation*}
\Pr \left( \hat{c}_{n}<S\leq c^{\ast }\right) >0.
\end{equation*}%
The same conclusion holds if in (\ref{pos_interval}) the conditioning
variable $\hat{c}_{n}$ is replaced by some variable $w_{n}$, say, provided
that $\hat{c}_{n}$ is a measurable function of $w_{n}$.
\end{lemma}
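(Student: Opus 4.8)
The plan is to reduce the claim --- which implicitly concerns the \emph{random} interval $(\hat{c}_n,c^{\ast}]$ --- to hypothesis \eqref{pos_interval}, which is only assumed for \emph{fixed} intervals $J$; the bridge is a rational discretization of the random endpoint $\hat{c}_n$. First I would use $\Pr(\hat{c}_n<c^{\ast})>0$ together with the identity $\{\hat{c}_n<c^{\ast}\}=\bigcup_{q\in\mathbb{Q},\,q<c^{\ast}}\{\hat{c}_n\le q\}$. As this is a countable union, at least one of the events must carry positive probability, so I may fix a rational $q_0<c^{\ast}$ with $\Pr(\hat{c}_n\le q_0)>0$.

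Next I would record the elementary inclusion that on $\{\hat{c}_n\le q_0\}$ the event $\{q_0<S\le c^{\ast}\}$ forces $\hat{c}_n\le q_0<S\le c^{\ast}$, so that
\[
\{\hat{c}_n\le q_0\}\cap\{q_0<S\le c^{\ast}\}\subseteq\{\hat{c}_n<S\le c^{\ast}\},
\]
and hence $\Pr(\hat{c}_n<S\le c^{\ast})\ge\Pr(\hat{c}_n\le q_0,\,q_0<S\le c^{\ast})$. I would then evaluate the right-hand side by conditioning on $\hat{c}_n$; since $\mathbf{1}(\hat{c}_n\le q_0)$ is $\sigma(\hat{c}_n)$-measurable it pulls out of the conditional expectation, giving
\[
\Pr(\hat{c}_n\le q_0,\,q_0<S\le c^{\ast})=E\!\left[\mathbf{1}(\hat{c}_n\le q_0)\,\Pr\!\left(S\in(q_0,c^{\ast}]\mid\hat{c}_n\right)\right].
\]
Because $(q_0,c^{\ast}]$ is a fixed non-empty interval, \eqref{pos_interval} applies with $J=(q_0,c^{\ast}]$ and yields $\Pr(S\in(q_0,c^{\ast}]\mid\hat{c}_n)>0$ almost surely; the integrand is thus nonnegative and strictly positive on the positive-probability event $\{\hat{c}_n\le q_0\}$, so the expectation is strictly positive.

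The one point demanding care --- and the main obstacle --- is exactly that \eqref{pos_interval} is granted only for deterministic intervals, so one cannot naively substitute the random interval $(\hat{c}_n,c^{\ast}]$; the rational discretization removes this, and the $\sigma(\hat{c}_n)$-measurability of the indicator is what legitimizes the pull-out. The final assertion, in which conditioning is on $w_n$ with $\hat{c}_n$ a measurable function of $w_n$, follows by the identical argument: $\mathbf{1}(\hat{c}_n\le q_0)$ is then $\sigma(w_n)$-measurable, and \eqref{pos_interval} with $w_n$ as the conditioning variable again gives $\Pr(S\in(q_0,c^{\ast}]\mid w_n)>0$ almost surely.
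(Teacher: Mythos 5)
Your proof is correct, and it takes a somewhat different---and in fact more careful---route than the paper's. The paper conditions directly on $\hat{c}_{n}$ and writes $\Pr \left( \hat{c}_{n}<S\leq c^{\ast }\right) =E\left[ \Pr \left( S\in (\hat{c}_{n},c^{\ast }]\mid \hat{c}_{n}\right) \boldsymbol{1}\left( \hat{c}_{n}<c^{\ast }\right) \right] $, then asserts that the integrand is almost surely positive on the positive-probability event $\left\{ \hat{c}_{n}<c^{\ast }\right\} $ by (\ref{pos_interval}). That step applies the hypothesis to the \emph{random} interval $(\hat{c}_{n},c^{\ast }]$, whereas (\ref{pos_interval}) is stated only for fixed intervals $J$; making it airtight requires precisely the kind of countable reduction you perform, since the exceptional null set in (\ref{pos_interval}) may depend on $J$ and one must pass to a countable family of rational intervals to obtain a single null set. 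Your argument sidesteps the issue entirely: you extract one rational $q_{0}<c^{\ast }$ with $\Pr (\hat{c}_{n}\leq q_{0})>0$ from the countable union $\left\{ \hat{c}_{n}<c^{\ast }\right\} =\bigcup_{q\in \mathbb{Q},\,q<c^{\ast }}\left\{ \hat{c}_{n}\leq q\right\} $, lower-bound the target probability by $\Pr (\hat{c}_{n}\leq q_{0},\,q_{0}<S\leq c^{\ast })$ via the inclusion you state, and then invoke (\ref{pos_interval}) only for the fixed interval $(q_{0},c^{\ast }]$; the pull-out of the $\sigma (\hat{c}_{n})$-measurable indicator and the treatment of the $w_{n}$ variant are both handled correctly. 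What the paper's phrasing buys is brevity and an exact identity rather than a lower bound; what yours buys is that the hypothesis is used only in the form in which it is literally stated. Both arguments are valid.
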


\begin{proof}
Clearly%
\begin{equation*}
\Pr \left( \hat{c}_{n}<S\leq c^{\ast }\right) =E\left[ \Pr \left( S\in (\hat{%
c}_{n},c^{\ast }]\mid \hat{c}_{n}\right) \right] =E\left[ \Pr \left( S\in (%
\hat{c}_{n},c^{\ast }]\mid \hat{c}_{n}\right) \boldsymbol{1}\left( \hat{c}%
_{n}<c^{\ast }\right) \right] ,
\end{equation*}%
the last equality being true since the first term in the product is zero on
the event $\hat{c}_{n}=c^{\ast }$. Now note that the first factor in the
expectation on the far right-hand side of the above equality is positive
almost surely by (\ref{pos_interval}) on the event $\left\{ \hat{c}%
_{n}<c^{\ast }\right\} $, and that the event $\left\{ \hat{c}_{n}<c^{\ast
}\right\} $ has positive probability by assumption.
\end{proof}

\bigskip

Recall that $\bar{c}_{\gamma }(v)$ has been defined in the proof of Theorem %
\ref{main}.

\begin{lemma}
\label{quant}Assume $\rho _{n}\equiv \rho \neq 0$. Suppose $0<v<1$. Then the
map $\gamma \rightarrow \bar{c}_{\gamma }(v)$ is continuous on $\mathbb{R}$.
Furthermore, $\lim_{\gamma \rightarrow \infty }\bar{c}_{\gamma
}(v)=\lim_{\gamma \rightarrow -\infty }\bar{c}_{\gamma }(v)=\Phi ^{-1}(1-v)$.
\end{lemma}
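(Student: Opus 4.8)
The plan is to reduce both assertions to one soft principle: pointwise convergence of probability densities forces convergence of the associated $(1-v)$-quantiles, \emph{provided} the limiting cdf is continuous and strictly increasing at that quantile. Here every relevant limit law has an everywhere-positive density, so its cdf is strictly increasing and continuous, and the principle applies without fuss. First I would record the reparameterized density. Substituting $\gamma=n^{1/2}\beta/\sigma_{\beta,n}$ into the displayed formula for $h_{n,\beta}$ gives
\[
\bar{h}_{\gamma}(u)=\Delta(\gamma,c)\,\phi\!\left(u+\rho(1-\rho^{2})^{-1/2}\gamma\right)+\left(1-\Delta\!\left((1-\rho^{2})^{-1/2}(\gamma+\rho u),(1-\rho^{2})^{-1/2}c\right)\right)\phi(u),
\]
where $\Delta(a,b)=\Phi(a+b)-\Phi(a-b)$. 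For each fixed $u$ this is plainly a continuous function of $\gamma$, since $\Delta$ and $\phi$ are continuous.

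For continuity of $\gamma\mapsto\bar{c}_{\gamma}(v)$ I would argue sequentially. Fix $\gamma_{0}$ and let $\gamma_{k}\to\gamma_{0}$. Then $\bar{h}_{\gamma_{k}}\to\bar{h}_{\gamma_{0}}$ pointwise, and since each $\bar{h}_{\gamma}$ is a probability density, Scheff\'{e}'s lemma upgrades this to $L^{1}$-convergence; hence the cdfs converge uniformly, $\sup_{t}|\bar{H}_{\gamma_{k}}(t)-\bar{H}_{\gamma_{0}}(t)|\le\int|\bar{h}_{\gamma_{k}}-\bar{h}_{\gamma_{0}}|\to0$. Because $\bar{h}_{\gamma_{0}}>0$ everywhere, $\bar{H}_{\gamma_{0}}$ is continuous and strictly increasing, so for any $\varepsilon>0$ strict monotonicity gives $\bar{H}_{\gamma_{0}}(\bar{c}_{\gamma_{0}}(v)-\varepsilon)<1-v<\bar{H}_{\gamma_{0}}(\bar{c}_{\gamma_{0}}(v)+\varepsilon)$; the uniform convergence transfers these strict inequalities to $\bar{H}_{\gamma_{k}}$ for all large $k$, which pins $\bar{c}_{\gamma_{k}}(v)$ into the interval $(\bar{c}_{\gamma_{0}}(v)-\varepsilon,\bar{c}_{\gamma_{0}}(v)+\varepsilon)$. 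As $\varepsilon$ is arbitrary, $\bar{c}_{\gamma_{k}}(v)\to\bar{c}_{\gamma_{0}}(v)$, which is continuity.

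For the limits as $\gamma\to\pm\infty$ I would establish $\bar{h}_{\gamma}\to\phi$ pointwise and then rerun the same mechanism with limiting cdf $\Phi$. Both vanishing terms are read off the display: $\Delta(\gamma,c)=\Phi(\gamma+c)-\Phi(\gamma-c)\to0$ eliminates the first summand, while $(1-\rho^{2})^{-1/2}(\gamma+\rho u)\to\pm\infty$ forces $\Delta\!\left((1-\rho^{2})^{-1/2}(\gamma+\rho u),(1-\rho^{2})^{-1/2}c\right)\to0$, so the bracket tends to $1$ and $\bar{h}_{\gamma}(u)\to\phi(u)$. Scheff\'{e}'s lemma again yields uniform convergence $\bar{H}_{\gamma}\to\Phi$, and since $\Phi$ is continuous and strictly increasing the same quantile-convergence step gives $\bar{c}_{\gamma}(v)\to\Phi^{-1}(1-v)$ along both $\gamma\to+\infty$ and $\gamma\to-\infty$.

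The only genuinely delicate point is the passage from convergence of distributions to convergence of quantiles, which in general can fail; it is legitimate here precisely because every limiting cdf has an everywhere-positive density (a fact already noted in the text for $H_{n,\beta}$), making it strictly increasing and continuous and hence the $(1-v)$-quantile unique and continuously attained. Everything else --- continuity of the density in $\gamma$ and the two limits driven by $\Delta(\gamma,c)\to0$ --- is routine.
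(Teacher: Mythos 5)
Your proposal is correct and follows essentially the same route as the paper: pointwise convergence of the densities $\bar{h}_{\gamma}$ (both for $\gamma_k\to\gamma_0$ and for $\gamma\to\pm\infty$), Scheff\'{e}'s lemma to upgrade this to total-variation/uniform convergence of the cdfs, and strict monotonicity of the everywhere-positive-density limit cdf to conclude convergence of the $(1-v)$-quantiles. You merely spell out the quantile-convergence step and the verification of the pointwise limits in more detail than the paper does.
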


\begin{proof}
If $\gamma _{l}\rightarrow \gamma $ then $\bar{h}_{\gamma _{l}}$ converges
to $\bar{h}_{\gamma }$ pointwise on $\mathbb{R}$. By Scheff\'{e}'s Lemma, $%
\bar{H}_{\gamma _{l}}$ then converges to $\bar{H}_{\gamma }$ in total
variation distance. Since $\bar{H}_{\gamma }$ is strictly increasing on $%
\mathbb{R}$, convergence of the quantiles $\bar{c}_{\gamma _{l}}(v)$ to $%
\bar{c}_{\gamma }(v)$ follows. The second claim follows by the same argument
observing that $\bar{h}_{\gamma }$ converges pointwise to a standard normal
density for $\gamma \rightarrow \pm \infty $.
\end{proof}

\begin{lemma}
\label{quant_2}Assume $\rho _{n}\equiv \rho \neq 0$.

(i) Suppose $0<v\leq 1/2$. Then for some $\gamma \in \mathbb{R}$ we have
that $\bar{c}_{\gamma }(v)$ is larger than $\Phi ^{-1}(1-v)$.

(ii) Suppose $1/2\leq v<1$. Then for some $\gamma \in \mathbb{R}$ we have
that $\bar{c}_{\gamma }(v)$ is smaller than $\Phi ^{-1}(1-v)$.
\end{lemma}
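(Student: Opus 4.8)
The plan is to reduce both parts to a single sign computation and to exploit two symmetries of the family $\{\bar h_{\gamma}\}$. Writing $\kappa=(1-\rho^{2})^{-1/2}$, a change of variables in the displayed formula for $\bar h_{\gamma}$ shows that the substitution $(\rho,\gamma)\mapsto(-\rho,-\gamma)$ leaves $\bar h_{\gamma}$ unchanged, and also that $\bar h_{-\gamma}(-u)=\bar h_{\gamma}(u)$. The first fact makes the attainable set $\{\bar c_{\gamma}(v):\gamma\in\mathbb{R}\}$ the same for $\rho$ and $-\rho$, so I may assume $\rho>0$ throughout. The second fact gives $\bar c_{-\gamma}(v)=-\bar c_{\gamma}(1-v)$, so part (ii) for $v\in[1/2,1)$ follows from part (i) applied to $1-v\in(0,1/2]$: if $\bar c_{\gamma_{0}}(1-v)>\Phi^{-1}(v)$ then $\bar c_{-\gamma_{0}}(v)=-\bar c_{\gamma_{0}}(1-v)<-\Phi^{-1}(v)=\Phi^{-1}(1-v)$. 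Hence it suffices to prove part (i) for $\rho>0$.

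For part (i), fix $v\in(0,1/2]$ and set $t^{\ast}=\Phi^{-1}(1-v)\ge 0$. Since $\bar H_{\gamma}$ is strictly increasing, $\bar c_{\gamma}(v)>\Phi^{-1}(1-v)$ is equivalent to $g(\gamma):=\bar H_{\gamma}(t^{\ast})-\Phi(t^{\ast})<0$, and I only need \emph{one} such $\gamma$. Integrating the displayed density and using the elementary identity $\int_{-\infty}^{y}\Phi(\kappa\rho u+a)\phi(u)\,du=\Phi_{2}(a/\kappa,\,y;-\rho)$ — where $\Phi_{2}(\cdot,\cdot;r)$ is the standard bivariate normal c.d.f.\ of correlation $r$, the identity being valid because $1+\kappa^{2}\rho^{2}=\kappa^{2}$ — yields the closed form
\[
g(\gamma)=\Delta(\gamma,c)\,\Phi\!\left(t^{\ast}+\rho\kappa\gamma\right)-\Big[\Phi_{2}(\gamma+c,\,t^{\ast};-\rho)-\Phi_{2}(\gamma-c,\,t^{\ast};-\rho)\Big].
\]
Equivalently, $g(\gamma)=\int_{\gamma-c}^{\gamma+c}\big[\Phi(t^{\ast}+\rho\kappa\gamma)-\Phi(\kappa(t^{\ast}+\rho x))\big]\phi(x)\,dx$.

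The plan is then to let $\gamma\to-\infty$ and read the sign off the Gaussian tail rates. Each of the three terms tends to $0$ with a dominant factor of the form $\exp(-\tfrac{1}{2}\kappa^{2}\gamma^{2}-(\text{linear in }\gamma))$; the common quadratic rate cancels, so the comparison is settled at linear order. A short Mills-ratio computation gives linear coefficient $-(c+t^{\ast}\rho\kappa)$ for the first term and $-\kappa^{2}(c+\rho t^{\ast})$ for $\Phi_{2}(\gamma+c,t^{\ast};-\rho)$, and the gap $\kappa^{2}(c+\rho t^{\ast})-(c+t^{\ast}\rho\kappa)=(\kappa-1)\big(c(\kappa+1)+\rho\kappa t^{\ast}\big)>0$ is strictly positive for $\rho>0$, $c>0$, $t^{\ast}\ge 0$. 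Thus $\Phi_{2}(\gamma+c,t^{\ast};-\rho)$ decays strictly slower than the first term, and likewise dominates $\Phi_{2}(\gamma-c,t^{\ast};-\rho)$; consequently $g(\gamma)/\Phi_{2}(\gamma+c,t^{\ast};-\rho)\to-1$, so $g(\gamma)<0$ for all sufficiently negative $\gamma$. This proves part (i) and, via the symmetry above, part (ii).

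The main obstacle is exactly this sign determination, and it genuinely cannot be settled at a convenient point such as $\gamma=0$: there one finds $g(0)=\int_{0}^{c}\big[2\Phi(t^{\ast})-\Phi(\kappa(t^{\ast}+\rho x))-\Phi(\kappa(t^{\ast}-\rho x))\big]\phi(x)\,dx$, whose integrand changes sign in $x$, so that $g(0)<0$ for small $c$ but $g(0)>0$ for large $c$. The argument is therefore forced out to the tail $\gamma\to-\infty$, and the delicate step is the bookkeeping of the polynomial (Mills-ratio) prefactors: one must verify that, the leading quadratic rates being equal, it is the linear coefficient that decides, and that the strict inequality $(\kappa-1)(c(\kappa+1)+\rho\kappa t^{\ast})>0$ indeed singles out the bivariate-normal term $\Phi_{2}(\gamma+c,t^{\ast};-\rho)$ as the slowest-decaying contribution. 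Everything else is routine.
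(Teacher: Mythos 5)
Your proof is correct, but it settles the decisive step by a genuinely different method than the paper. Both arguments come down to the same quantity: with $q=\Phi ^{-1}(1-v)\geq 0$ one must exhibit a $\gamma $ with $\bar{H}_{\gamma }(q)<\Phi (q)$, and your
\begin{equation*}
g(\gamma )=\int_{\gamma -c}^{\gamma +c}\left[ \Phi (q+\rho \kappa \gamma
)-\Phi (\kappa (q+\rho x))\right] \phi (x)\,dx
\end{equation*}
is exactly the paper's $\Pr (B)-\Pr (A)$ for two explicit events in the $(Z,W)$-plane. The paper determines the sign at a \emph{finite} $\gamma $ by geometry: after a case distinction on whether $\rho c\leq (1-\sqrt{1-\rho ^{2}})q$, either $B\subseteq A$ for every $\gamma $, or for a suitably chosen $\gamma $ the symmetric difference consists of two triangles compared via spherical symmetry and radial monotonicity of the bivariate Gaussian density. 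You instead send $\gamma \rightarrow -\infty $ and compare Gaussian tail exponents; I have checked the bookkeeping --- the cancellation of the common quadratic rate $-\kappa ^{2}\gamma ^{2}/2$, the linear coefficients $-(c+q\rho \kappa )$ and $-\kappa ^{2}(c+\rho q)$, the gap $(\kappa -1)\left( c(\kappa +1)+\rho \kappa q\right) >0$, and the irrelevance of the polynomial Mills-ratio prefactors --- and it is all in order, as are your two symmetry reductions (these are the density-level versions of the paper's distributional identities $T^{\prime }(\rho ,\gamma )\sim T^{\prime }(-\rho ,-\gamma )$ and $T^{\prime }(\rho ,\gamma )\sim -T^{\prime }(-\rho ,\gamma )$). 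Your route yields a sharper asymptotic statement ($g(\gamma )$ is eventually $\sim $ minus the dominant bivariate-normal term) and avoids any case distinction, at the cost of tail asymptotics; the paper's route produces a concrete finite witness $\gamma $ with no asymptotics. One factual correction to your closing aside, which however your proof never uses: for $q>0$ one in fact has $g(0)<0$ for \emph{every} $c>0$, not just small $c$. Indeed, as a function of $c$, $g(0)$ starts at $0$, its derivative $\phi (c)\left[ 2\Phi (q)-\Phi (\kappa (q+\rho c))-\Phi (\kappa (q-\rho c))\right] $ changes sign exactly once from negative to positive, and $g(0)\rightarrow \Phi (q)-\Phi (\kappa q/\kappa )=0$ as $c\rightarrow \infty $, so the limit $0$ is approached from below and is never overtaken. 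Thus $\gamma =0$ would actually suffice for $0<v<1/2$; only the boundary case $v=1/2$ (where $q=0$ and $g(0)=0$, consistent with the paper's remark that $\bar{c}_{0}(1/2)=0$) genuinely forces a nonzero $\gamma $.
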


\begin{proof}
Standard regression theory gives%
\begin{equation*}
\hat{\alpha}_{n}(\emph{U})=\hat{\alpha}_{n}(\emph{R})+\rho \sigma _{\alpha
,n}\hat{\beta}_{n}(\emph{U})/\sigma _{\beta ,n},
\end{equation*}%
with $\hat{\alpha}_{n}(\emph{R})$ and $\hat{\beta}_{n}(\emph{U})$ being
independent; for the latter cf., e.g., Leeb and P\"{o}tscher (2003), Lemma
A.1. Consequently, it is easy to see that the distribution of $T_{n}(\alpha
_{0})$ under $P_{n,\alpha _{0},\beta }$ is the same as the distribution of%
\begin{eqnarray*}
T^{\prime } &=&T^{\prime }(\rho ,\gamma )=\left( \sqrt{1-\rho ^{2}}W+\rho
Z\right) \boldsymbol{1}\left\{ \left\vert Z+\gamma \right\vert >c\right\} \\
&&+\left( W-\rho \frac{\gamma }{\sqrt{1-\rho ^{2}}}\right) \boldsymbol{1}%
\left\{ \left\vert Z+\gamma \right\vert \leq c\right\} ,
\end{eqnarray*}%
where, as before, $\gamma =n^{1/2}\beta /\sigma _{\beta ,n}$, and where $W$
and $Z$ are independent standard normal random variables.

We now prove (i): Let $q$ be shorthand for $\Phi ^{-1}(1-v)$ and note that $%
q\geq 0$ holds by the assumption on $v$. It suffices to show that $\Pr
\left( T^{\prime }\leq q\right) <\Phi (q)$ for some $\gamma $. We can now
write%
\begin{eqnarray*}
\Pr \left( T^{\prime }\leq q\right) &=&\Pr \left( \sqrt{1-\rho ^{2}}W+\rho
Z\leq q\right) -\Pr \left( \left\vert Z+\gamma \right\vert \leq c,W\leq 
\frac{q-\rho Z}{\sqrt{1-\rho ^{2}}}\right) \\
&&+\Pr \left( \left\vert Z+\gamma \right\vert \leq c,W\leq q+\frac{\rho
\gamma }{\sqrt{1-\rho ^{2}}}\right) \\
&=&\Phi (q)-\Pr (A)+\Pr (B).
\end{eqnarray*}%
Here, $A$ and $B$ are the events given in terms of $W$ and $Z$. Picturing
these two events as subsets of the plane (with the horizontal axis
corresponding to $Z$ and the vertical axis corresponding to $W$), we see
that $A$ corresponds to the vertical band where $|Z+\gamma |\leq c$,
truncated above the line where $W=(q-\rho Z)/\sqrt{1-\rho ^{2}}$; similarly, 
$B$ corresponds to the same vertical band $|Z+\gamma |\leq c$, truncated now
above the horizontal line where $W=q+\rho \gamma /\sqrt{1-\rho ^{2}}$.

We first consider the case where $\rho >0$ and distinguish two cases:

Case 1: $\rho c\leq \left( 1-\sqrt{1-\rho ^{2}}\right) q$.

In this case the set $B$ is contained in $A$ for every value of $\gamma $,
with $A\backslash B$ being a set of positive Lebesgue measure. Consequently, 
$\Pr (A)>\Pr (B)$ holds for every $\gamma $, proving the claim.

Case 2: $\rho c>\left( 1-\sqrt{1-\rho ^{2}}\right) q$.

In this case choose $\gamma $ so that $-\gamma -c\geq 0$, and, in addition,
such that also $(q-\rho (-\gamma -c))/\sqrt{1-\rho ^{2}}<0$, which is
clearly possible. Recalling that $\rho >0$, note that the point where the
line $W=(q-\rho Z)/\sqrt{1-\rho ^{2}}$ intersects the horizontal line $%
W=q+\rho \gamma /\sqrt{1-\rho ^{2}}$ has as its first coordinate $Z=-\gamma
+(q/\rho )(1-\sqrt{1-\rho ^{2}})$, implying that the intersection occurs in
the right half of the band where $|Z+\gamma |\leq c$. As a consequence, $\Pr
(B)-\Pr (A)$ can be written as follows:

\begin{equation*}
\Pr (B)-\Pr (A)=\Pr (B\backslash A)-\Pr (A\backslash B)
\end{equation*}%
where%
\begin{eqnarray*}
B\backslash A &=&\left\{ -\gamma +(q/\rho )(1-\sqrt{1-\rho ^{2}})\leq Z\leq
-\gamma +c,\right. \\
&&\left. (q-\rho Z)/\sqrt{1-\rho ^{2}}<W\leq q+\rho \gamma /\sqrt{1-\rho ^{2}%
}\right\}
\end{eqnarray*}%
and 
\begin{eqnarray*}
A\backslash B &=&\left\{ -\gamma -c\leq Z\leq -\gamma +(q/\rho )(1-\sqrt{%
1-\rho ^{2}}),\right. \\
&&\left. q+\rho \gamma /\sqrt{1-\rho ^{2}}<W\leq (q-\rho Z)/\sqrt{1-\rho ^{2}%
}\right\} .
\end{eqnarray*}

Picturing $A\backslash B$ and $B\backslash A$ as subsets of the plane as in
the preceding paragraph, we see that these events correspond to two
triangles, where the triangle corresponding to $A\backslash B$ is larger
than or equal (in Lebesgue measure) to that corresponding to $B\backslash A$%
. Since $\gamma $ was chosen to satisfy $-\gamma -c\geq 0$ and $(q-\rho
(-\gamma -c))/\sqrt{1-\rho ^{2}}<0$, we see that each point in the triangle
corresponding to $A\backslash B$ is closer to the origin than any point in
the triangle corresponding to $B\backslash A$. Because the joint Lebesgue
density of $(Z,W)$, i.e., the bivariate standard Gaussian density, is
spherically symmetric and radially monotone, it follows that $\Pr
(B\backslash A)-\Pr (A\backslash B)<0$, as required.

The case $\rho <0$ follows because $T^{\prime }(\rho ,\gamma )$ has the same
distribution as $T^{\prime }(-\rho ,-\gamma )$.

Part (ii)\ follows since $T^{\prime }(\rho ,\gamma )$ has the same
distribution as $-T^{\prime }(-\rho ,\gamma )$.
\end{proof}

\begin{remark}
\normalfont If $\rho _{n}\equiv \rho \neq 0$ and $v=1/2$, then $\bar{c}%
_{0}(1/2)=\Phi ^{-1}(1/2)=0$ since $\bar{h}_{0}$ is symmetric about zero.
\end{remark}

\begin{remark}
\normalfont If $\rho _{n}\equiv \rho =0$, then $T_{n}(\alpha _{0})$ is
standard normally distributed for every value of $\beta $, and hence $\bar{c}%
_{\gamma }(v)=\Phi ^{-1}(1-v)$ holds for every $\gamma $ and $v$.
\end{remark}

\end{document}